\allowdisplaybreaks \numberwithin{equation}{section}
\numberwithin{equation}{section}
\begin{document}

\title{A note on fibrations of Campana general type on surfaces}
\author{L. Stoppino}
\date{February 14, 2010}

\newcommand{\av}{``}

\newtheorem{teo}{Theorem}[section]
\newtheorem{prop}[teo]{Proposition}
\newtheorem{lem}[teo]{Lemma}
\newtheorem{conj}[teo]{Conjecture}

\theoremstyle{definition}
\newtheorem{rem}[teo]{Remark}
\newtheorem{defi}[teo]{Definition}
\newtheorem{ex}[teo]{Example}
\newtheorem{ass}[teo]{Assumption}

\renewcommand{\theequation}{\arabic{section}.\arabic{equation}}

\newcommand{\qu}{\mathbb{Q}}
\newcommand{\puno}{\mathbb{P}^1}
\newcommand{\pr}{\mathbb P}
\newcommand{\Z}{\mathbb{Z}}
\newcommand{\R}{\mathbb{R}}
\newcommand{\C}{\mathbb {C}}
\newcommand{\oo}{\mathcal {O}}
\newcommand{\pia}{{\pi_1^{alg}}}
\newcommand{\piu}{{\pi_1}}
\newcommand{\xu}{{X_1}}
\newcommand{\xd}{{X_2}}
\newcommand{\xt}{{X_3}}
\newcommand{\xq}{{X_4}}

\pagestyle{myheadings}
\markboth{\small{Lidia Stoppino }}{\small{\textit{Fibrations of Campana general type on surfaces}}}

\maketitle

\begin{abstract}
We construct examples of simply connected surfaces with genus $2$ fibrations over $\mathbb P^1$ which are of ``general type'' according to the definition of Campana. These fibrations have special fibres such that the minimum of the multiplicities of the components is $\geq 2$ while the g.c.d is $1$. We can extend the construction to any even genus $g$.
\end{abstract}

\section*{Introduction}
In the classification of varieties proposed by Campana  (\cite{Cam}, \cite{camsurvey}), a central r\^ole is played by the concept of general  type 
fibrations.
Roughly speaking, a fibration $f\colon X\longrightarrow Y$ is of general type if either $Y$ is of general type, or $f$ has ``enough
multiple fibres'' (Definition \ref{gtf}).
Campana's definition of multiplicity of a fibre is different from the classical one: in the case of surfaces, 
it is the  minimum of the multiplicities of the components of the fibre, while classical multiplicity is the greater common divisor.
Although the definitions are different, a first natural question is whether there exists or not 
a fibration such that the minimum and the g.c.d of the multiplicities of the components of some fibres
are different; let us call such fibres {\em $C$-fibres}. The problem of existence is solved by a theorem of Winters \cite{Win}  asserting that a curve with locally planar support such that the multiplicities of its components satisfy a natural combinatorial assumption is the fibre of a surjective proper morphism from a smooth surface to a curve.
Winters' result thus reduces the question of the existence of $C$-fibres to a combinatorial one.

It is however interesting to explicitly construct examples of such fibrations, and in particular it is significant from the point of view of Campana's classification to study general type fibrations where the base curve is rational or elliptic.
The arguments used by Winters are of very abstract nature (Deformation Theory and Grothendieck algebrization Theorem), 
and do not allow neither to construct explicit examples nor to  have any control on the base of the fibration, which is obtained by completion from an affine base. 

Campana gives in \cite{Cfm} an example of  general type fibrations of genus 13 over $\puno$ with no classical multiple fibres.
The $C$-fibres in this example are simply connected. This implies that classical multiple fibres and $C$-fibres are different concepts even from the topological point of view; 
indeed, a multiple fibre of a connected fibration can not  be simply connected (see Remark \ref{multversussimply}). 
Moreover, as these are fibrations over $\pr^1$ with a simply connected fibre, the total space  is simply connected itself; this means that fibrations with  $C$-fibres does not have topological restrictions on the fundamental group, as fibrations with classical multiple fibres do (see Remark \ref{multiple}).

It seems interesting to exibit  more $C$-fibres, and to construct  examples in arbitrary genera. 
There are no $C$-fibres in genus $1$; this follows from the classification of singular elliptic fibres done by Horikawa \cite{BHPVdV} V.7.
As the $C$-fibres are very ``complicated'', it seems natural to guess that there is an upper  bound on the genus of fibrations possessing them.
 
This note arises from the observation that in the classification of genus $2$ fibres done by Namikawa-Ueno and Ogg there are exactly four types of $C$-fibres.
Using the local equations provided in \cite{N-U} we construct surfaces with genus two fibrations over $\puno$ and any possible $C$-fibre.
From these, we can form  general type fibrations over $\puno$, and we prove that they are simply connected (Theorem \ref{main}).
It is worth noticing that  genus two fibrations do not admit any  multiple fibre in the classical sense (Remark \ref{nomult}), hence these examples provide
another evidence of the difference between the two notions.
Our construction is very simple, once one is provided with a local equation for these singular fibres; we consider a suitable double cover of $\puno\times\puno$, and apply the canonical resolution process. Eventually, we perform a base change in order to obtain ``enough'' multiple fibres. As an illustration, we develop in detail  in \ref{explicituno} the computations for verifying the existence of the $C$-fibre in one case. Thanks to this insight on the resolution of our surfaces, we  can investigate their geometry and compute their invariants in some cases (Proposition \ref{ratio}).
The fibration we construct are in  two cases isotrivial, in the other ones not (Remark \ref{isotrivial})
From one of the local equations we can eventually derive very naturally a construction for $C$-fibres in any even genus (Theorem \ref{evengenus}).


Let us finally remark that the surfaces here constructed are defined over $\mathbb Q$.
One of the points of view from which Campana's classification is of compelling interest, is the study of diophantine problems. It would be interesting to analyze the density of rational points on these surfaces, in view of  Bombieri-Lang Conjecture
(see Remark \ref{density}).

\medskip

\noindent{\bf Acknowledgements} This note would never have been written without the kind and patient encouragement of Fr\'ed\'eric Campana, to whom I am very grateful. Roberto Pignatelli taught me how to handle special fibres of double covers and gave me countless advices: I thank him heartily. I also would like to thank Margarida Mendes Lopes for her enlightening comments on Winter's result, and Ingrid Bauer for pointing me out a key result on fundamental groups.

\section{Multiple fibres and general type fibrations}\label{C}
We work over the complex field $\C$.
A  \emph{fibred surface}  is the data of a smooth projective surface 
$S$ with a 
surjective morphism with connected fibres (a fibration for short) to a smooth complete curve $B$. 
The general fibres of a fibred surface are smooth and their genus is called the genus of the fibration.
Let  $F=\sum_i m_iC_i$ be a fibre of a fibration, where the $C_i$'s are the irreducible components. The fibre $F$ is said to be {\em multiple} of multiplicity $n$ if $n=g.c.d.\{ m_i\}>1$. 


\begin{rem}\label{multiple}\upshape{
Given a fibration $f\colon S\longrightarrow B$ with multiple fibres,
it is - almost every-time - possible to construct an \'etale cover of $S$ by an appropriate base change.
Indeed, from \cite{namba} we know that for any smooth curve of positive genus $B$, given a set of points 
$\{p_1,\cdots p_k\}$, and a string of integers strictly greater than $1$, $(m_1,\cdots,m_k)$, 
there exists a cover $B'$ of $B$, ramified exactly above the $p_i$'s, with ramification index $m_i$ on $p_i$. 
Of course this cover in general is not cyclic.
For $B=\puno$ the above statement holds except for the cases $k=1$, and $k=2$, $m_1\not= m_2$.
Given a fibration $f\colon S\longrightarrow B$ with multiple fibres 
(with the above restrictions if $B=\puno$),
consider the commutative diagram 
$$
\begin{CD}
S' @>\alpha>>  S\\
@VVV @VVfV\\
B'@>\psi>>B
\end{CD}
$$

\medskip

\noindent where  $\psi\colon B'\longrightarrow B$ is the  the cover ramified exactly at the points
corresponding to multiple fibres, with ramification index equal to the multiplicity, and 
 $S'$ is the normalization of the fibre product.
Then the morphism $\alpha $ is \'etale (cf. \cite{CamN}, Appendix A.C).}
Note that the fundamental group of $S$ contains a finite index subgroup $H$ which admits a surjective homomorphism on the fundamental group of a curve of genus $\geq 2$.
\end{rem}

Let $f\colon S\longrightarrow B$ be a fibred surface. 
For any fibre $f^*(b)=\sum_i m_iC_i$, we define $m(b)$ as $\min_i\{m_i\}$.
As the general fibre of $f$ is smooth, $m(b)=1$ for all but a finite set of points in $B$.
Consider  the $\qu$-divisor  on $B$
$$\Delta (f) =\sum_{b\in B} \left( 1-\frac{1}{m(b)}\right) b.\footnote{The data $(B, \Delta(f))$ is called the {\em base orbifold}, or the {\em constellation} \cite{Abr} of $f$.}$$
Let $\kappa (B, K_B+\Delta (f))$ be 
the Kodaira dimension of the $\qu$-divisor $K_B+\Delta (f)$, i.e. the Kodaira dimension of an integer multiple of  
$K_B+\Delta (f)$.

\begin{defi}[Campana]\label{gtf}
The fibration $f\colon S\longrightarrow B$ is said to be of general type if $$\kappa (B, K_B+\Delta (f))=2.$$
\end{defi}

\begin{ex}\label{multpuno}
\upshape{
Clearly, any fibration over a curve of genus greater or equal to two is of general type. A fibration over an elliptic curve $B$ is of general type if there is at least a $b\in B$ such that $m(b)\geq 2$.
A fibration  $f$ over $\puno$ is of general type if and only if 
$$-2+\sum_b \frac{m(b)-1}{m(b)}=deg (K_B+\Delta (f))>0.$$
Hence, for instance, no fibrations over $\puno$  with exactly two singular fibres are of general type.
If we consider the  string of integers given by the $m(b)$'s which are not equal to $1$, listed increasingly,
it is easy to see that  the cases in which $f$ over $\puno$ is not of general type are associated to the following strings:
$$(n), (n,m), (2,2,n), (2,3,k), (2,4,4), (3,3,3), (2,2,2,2),$$
with $n,m$ arbitrary integers strictly greater than $1$, and $2\leq k\leq 6$.
 Note that these are precisely the cases when the associated ramified cover of $\puno$
- as defined in Remark \ref{multiple} - has genus strictly smaller than $2$.}
\end{ex}

From Remark \ref{multiple} and Example \ref{multpuno} above, it is clear that if $f\colon S\longrightarrow B$ is a general type fibration  such that for any $b\in B$ the integer $m(b)$ equals  the classical multiplicity, then $S$ is of general type, because it has an \'etale cover of general type.
More generally, in \cite{Cfm}, Proposition 1.7 (see also \cite{Cam} Section 3.5) it is shown that if  $f\colon S\longrightarrow B$ is a general type fibration, then $S$ has Kodaira dimension $2$ if and only if the genus of $f$ is at least $2$.

\begin{defi}
Let $f\colon S\longrightarrow B$ be a fibred surface. We will say that a fibre $f^*(b)$ is of Campana type, and call it $C$-fibre, 
if it is not a multiple fibre, but $m(b)>1$.
\end{defi}

We now turn to the question of the existence of general type fibrations with $C$-fibres.
The following  result of Winters \cite{Win} holds. 

\begin{teo}[Winters]\label{winters}
Let $Z=\sum_iC_i$ be any reduced locally planar curve.
Let $\{m_i\}_{i\in I}$ be  a collection of positive integers.
If for any $i$, $m_i$ divides  $\sum_{j\not= i}m_j(C_i\cdot C_j)$, then there exists a proper surjective morphism from a smooth surface to a smooth complete curve such that $C=\sum_i m_iC_i$ as a fibre of such a morphism. 
\end{teo}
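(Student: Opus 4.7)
The plan is to use the divisibility hypothesis to prescribe self-intersection numbers for the components $C_i$, build a smooth formal surface germ around $Z$ realising them, deform $D := \sum_i m_i C_i$ in a formal one-parameter family by deformation theory, and finally algebrise and compactify.

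First, I would observe that the hypothesis is exactly what is needed in order for the integers
\[
C_i^2 := -\frac{1}{m_i}\sum_{j\neq i} m_j\,(C_i\cdot C_j)
\]
to be well-defined, and that with this choice $D\cdot C_i = 0$ for every $i$; so the combinatorial data has been arranged precisely so that $D$ behaves numerically like a fibre, which is evidently a necessary condition. Next, using the local planarity of $Z$, one constructs a smooth formal surface germ $\widehat S$ containing $Z$ in which each $C_i$ has the self-intersection prescribed above. Locally at every point of $Z$ an embedding into $\C^2$ is given by hypothesis; globally, the patching datum on each $C_i$ amounts to a line bundle playing the role of its normal bundle, and the locally planar condition ensures that these line bundles may be chosen consistently across the singular points of $Z$ with arbitrary prescribed degree on each component.

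On $\widehat S$, the sheaf $\oo_{\widehat S}(D)|_D$ then has degree zero on every component of $D$, and one runs deformation theory for the functor of Cartier deformations of $D$ inside $\widehat S$: first-order deformations are parametrised by $H^0(D,\oo_D(D))$, obstructions live in $H^1(D,\oo_D(D))$, and the numerical triviality of $\oo_D(D)$ (together with the divisibility constraints) is exploited both to produce a non-zero first-order deformation and to kill all higher obstructions order by order. The outcome is a formal morphism $\widehat S \to \mathrm{Spf}\,\C[[t]]$ whose central fibre is $D$. Grothendieck's formal existence theorem (EGA III) then algebrises this formal fibration into a proper morphism from an actual smooth surface to $\mathrm{Spec}\,\C[[t]]$; spreading out over a finite-type base and compactifying the resulting one-dimensional base yields the required proper surjective morphism from a smooth surface onto a smooth complete curve, with $D$ as a fibre.

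The main obstacle is the combination of constructing $\widehat S$ with the prescribed self-intersections when $Z$ has general (locally planar, but not necessarily normal-crossing) singularities, and of verifying that the formal obstructions to deforming $D$ all vanish. Once these two analytic-cum-deformation-theoretic points are in place, the algebrisation via EGA III and the subsequent compactification are essentially formal.
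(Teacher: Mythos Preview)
The paper does not actually prove Theorem~\ref{winters}; it is quoted from \cite{Win} without proof. Your sketch is precisely the strategy the paper attributes to Winters in the Introduction: ``Deformation Theory and Grothendieck algebrization Theorem'', with the base curve ``obtained by completion from an affine base''. So as far as comparison goes, your outline is faithful to the original argument that the paper is citing, and there is nothing further to compare against in this paper.
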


\begin{rem}\label{multversussimply}One has to be aware that Winters' result does not establish the existence of a fibration, i.e. of a proper morphism with connected fibres. Note for instance that if $G$ is any simply connected locally planar reduced curve then for any $m$ the curve $C=mG$ satisfies the conditions of Theorem \ref{winters} and hence it can be seen as the fibre of a morphism $f\colon S\longrightarrow B$. However, necessarily $f$ has not connected fibres. Indeed if $f$ would be a fibration, the sheaf $\mathcal O_{C|G}$ would be a torsion sheaf on $G$ (\cite{BHPVdV}, Lemma III.8.3), so that $\pi_1(G)\not = \{1\}$, a contradiction.
\end{rem} 

On the other hand, it is immediate to see that the case of a classically multiple fibre is the only case when the morphism of Winters is not a fibration: 
\begin{prop}
Let $C=\sum_i m_iC_i$ be the fibre of a proper surjective morphism $f\colon S\longrightarrow B$ from a smooth projective surface $S$ to a smooth complete curve $B$. Suppose that $C$ is connected and  that g.c.d.$\{ m_i\}=1$. Then $f$ has connected fibres; hence, it is a fibration.
\begin{proof}
Consider the Stein factorization of $f$
\begin{equation}\label{diag}
\xymatrix{
\widetilde S \ar[dr]_{f'}\ar[rr]^f &&B \\
&B'\ar[ur]_\gamma\\}
\end{equation}
Where $\gamma$ is a finite morphism of degree $d$ and $f'$ a fibration (in particular it has connected fibres). Suppose that $f$ has not connected fibres: equivalently $d\geq 2$. Let $C$ be as above, and $b\in B$ such that $f^*(b)=C$. 
As $f^*=f'^*\circ \gamma^*$, the support of $f^*$ is connected if and only if $b$ is a branch point of total ramification for $\gamma$. But then $C=f'^*(\gamma^*(b))=f'^*(db')=dF'$, hence g.c.d.$\{ m_i\}\geq d\geq 2$, contrary to the assumption.
\end{proof}
\end{prop}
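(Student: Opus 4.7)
The plan is to argue by contradiction, using the Stein factorization of $f$ as a tool to convert the failure of connectedness into a divisibility statement that contradicts $\gcd\{m_i\}=1$.

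First I would apply Stein factorization to obtain a commutative triangle $f=\gamma\circ f'$ with $f'\colon S\to B'$ a fibration and $\gamma\colon B'\to B$ a finite morphism of some degree $d\geq 1$, and assume for contradiction that $d\geq 2$. The key observation is then purely topological: since pullback of Cartier divisors is functorial, one has $f^*(b)=f'^*(\gamma^*(b))$. Writing $\gamma^*(b)=\sum_{b'\in\gamma^{-1}(b)} e_{b'}\,b'$ with ramification indices $e_{b'}$ summing to $d$, one gets a decomposition $f^*(b)=\sum_{b'\in\gamma^{-1}(b)} e_{b'}f'^*(b')$ of the divisor $C$ into pieces supported on the disjoint (connected) fibres $f'^{-1}(b')$.

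Since $C$ is assumed connected, the support of $f^*(b)$ must come from a single $b'$, i.e.\ $\gamma^{-1}(b)=\{b'\}$ with $e_{b'}=d$. Hence $f^*(b)=d\cdot f'^*(b')$, which means $d$ divides each coefficient $m_i$; this contradicts $\gcd\{m_i\}=1$ since $d\geq 2$. So we must have $d=1$, meaning $\gamma$ is an isomorphism and $f=f'$ itself has connected fibres.

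The only step that requires any care is verifying that connectedness of $C$ really does force $\gamma^{-1}(b)$ to be a single point: this is just the fact that the fibres $f'^{-1}(b')$ over distinct points $b'\in\gamma^{-1}(b)$ are pairwise disjoint closed subsets of $S$, so a nontrivial decomposition of $\gamma^*(b)$ would yield a nontrivial decomposition of $\mathrm{Supp}\,f^*(b)=\mathrm{Supp}\,C$ into disjoint closed pieces, contradicting connectedness. Everything else is formal.
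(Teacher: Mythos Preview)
Your proof is correct and follows essentially the same route as the paper: Stein factorization $f=\gamma\circ f'$, then the observation that connectedness of $C$ forces $b$ to be a point of total ramification for $\gamma$, yielding $C=d\cdot f'^*(b')$ and hence $d\mid m_i$ for all $i$. Your write-up is in fact slightly more explicit than the paper's in justifying why connectedness implies $\gamma^{-1}(b)$ is a single point.
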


As pointed out in the Introduction, Winters' result is not constructive, and in particular it does not guarantee the existence of fibrations with $C$-fibres and no multiple fibres, and the existence of general type fibrations over base curves of genus smaller than one. 

The first example of fibration with $C$-fibres  is provided by Campana in \cite{Cfm}, Sec. 5.
The author constructs a genus $13$ fibration over $\puno$ without multiple fibres and with a fibre made of $5$ rational smooth curves all meeting in a point, 3 with multiplicity 2 and 2 with multiplicity 3, as in figure below.
\smallskip
\begin{center}
{}{}\scalebox{0.30}{\includegraphics{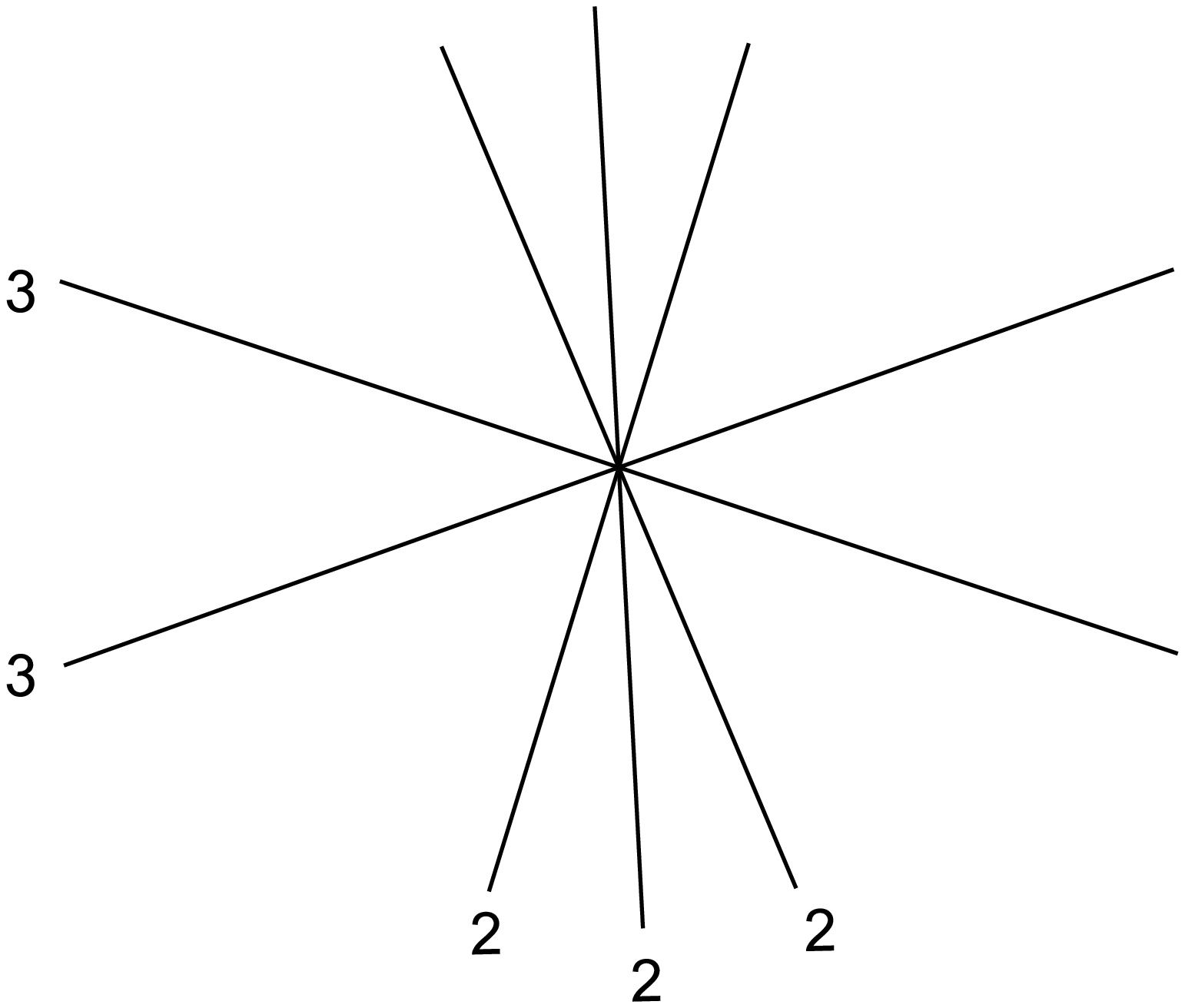}}
\end{center}

From this, by performing a suitable base change, he can produce a surface fibred over $\puno$ with at least $6$ fibres of the above type.
Moreover, the surface is simply connected, because it has a fibration over $\puno$ with a simply connected fibre.


\section{Campana's fibrations in genus two}\label{C2}

The fact that  $C$-fibres don't exist in genus one follows from the classification of singular elliptic fibres 
done by Horikawa (\cite{BHPVdV} V.7). 
It seems natural to think that in low genus no $C$-fibres exist.
For instance, it is not hard to see that  the lowest possible genus of a $C$-fibre made of  two irreducible components is $11$,  achieved 
by a curve of the form $F=2A+3B$ where $A$, $B$ are two smooth rational curves meeting transversally in $6$ points.
However, from the lists given by Ogg \cite{ogg2} and by Namikawa-Ueno  \cite{N-U}, we see that there are $C$-fibres already in genus two. There are in fact  $4$  possible configurations, having much more than two irreducible components; listed in Figure 1 below.
For the reader's convenience, we report in the figure the corresponding numbers of Ogg's list. The straight lines in the picture are smooth rational components, the number attached to them are, as usual, their multiplicities.
\begin{center}

{}{}\scalebox{0.60}{\includegraphics{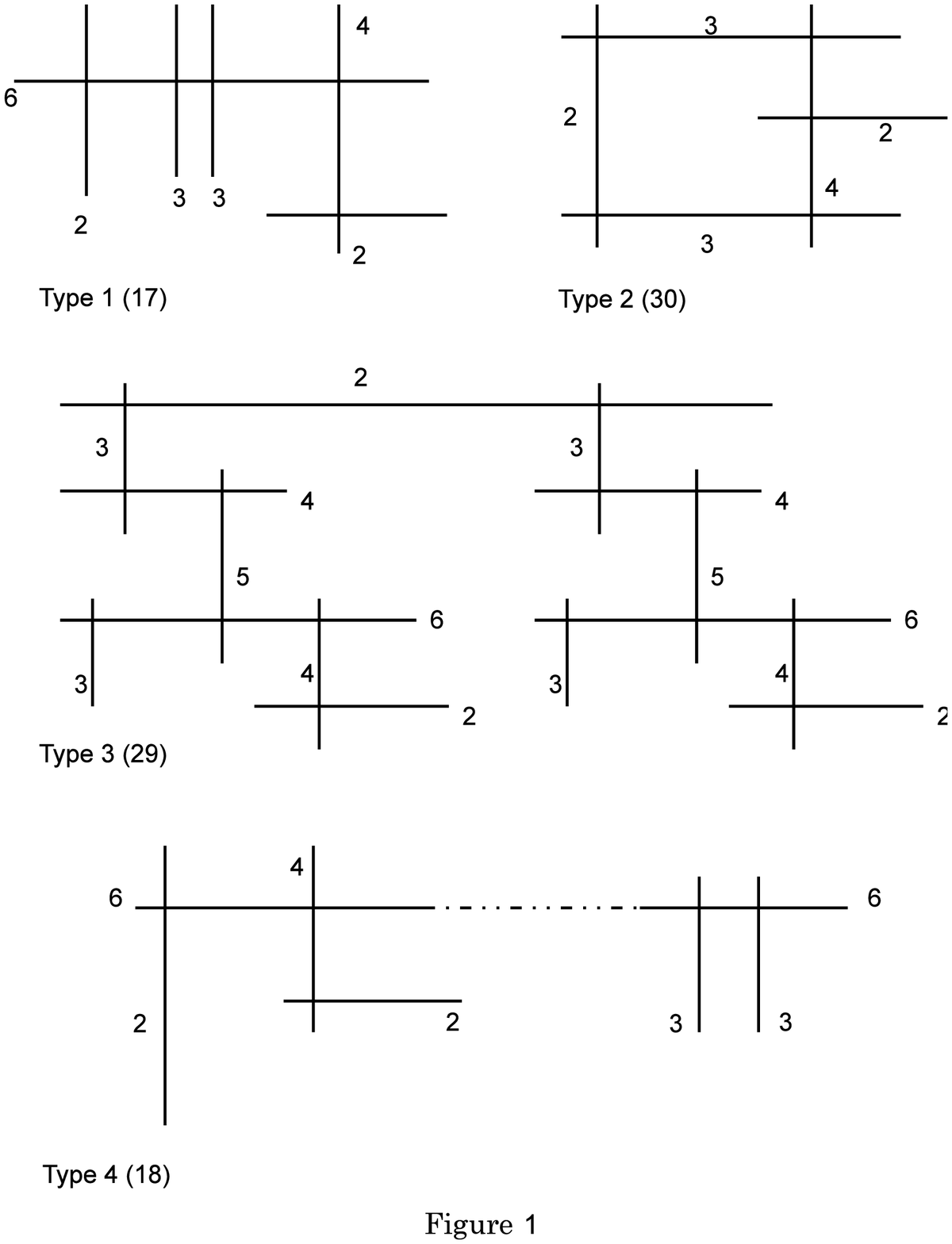}}
\end{center}
\vspace{-1.5cm}

\begin{rem}\label{nomult}
The fact of finding $C$-fibres of genus two fibrations is another evidence of the difference between this kind of fibres 
and the multiple ones, because   genus two fibrations does not admit multiple fibres.
Indeed, this follows from this more general simple remark.
Let $S\longrightarrow B$ be a fibration of genus $g$. 
Then, for any fibre $F$, by the adjunction formula, $(K_S\cdot F)=2g-2$. 
Suppose that $F$ is a multiple fibre, i.e. $F=nE$, with $E$ effective.
Then $(K_S\cdot E)= (2g-2)/n.$
The arithmetic genus of $E$ is 
$$p_a(E)= 1+\frac{1}{2}(K_S\cdot E)=1 + \frac{g-1}{n}.$$
Hence, given any fibration of genus $g$, the multiplicity of the fibres divides $g-1$. 
\end{rem}
The main result can be summarized as follows (Proposition \ref{prop1} and Proposition \ref{cornalba} below).

\begin{teo}\label{main}
For any $i=1,\ldots,4$ there exist general type fibrations $g_i\colon S_i\longrightarrow \pr^1$ such that \\
$(i)$ the surfaces $S_i$ are of general type and simply connected;\\
$(ii)$ Any $g_i$ admits fibres of type $i$, and no multiple fibres.
\end{teo}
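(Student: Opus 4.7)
The plan is to split Theorem \ref{main} into the two propositions signalled by its statement and to prove each $i\in\{1,2,3,4\}$ case by an explicit construction.

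\emph{Construction.} Use the local equations of Namikawa--Ueno \cite{N-U}, which realise each of the four genus-two $C$-fibre types as the central fibre of a one-parameter family of double covers of the disc branched along a specific plane curve singularity. Globally, choose a branch divisor $B \subset \puno \times \puno$ whose singularities at selected points $(p_k, 0)$ replicate the prescribed local model for type $i$ and which is otherwise generic (so that away from these points the discriminant of the projection to the second factor is reduced). Form the double cover $\widetilde Y \to \puno \times \puno$ branched along $B$ and apply the canonical resolution process---the standard recipe of alternating blow-ups of the singular points of the branch divisor with normalization---to obtain a smooth surface $Y_i$ carrying a genus two fibration $f_i\colon Y_i \to \puno$ via the second projection. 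By construction, $f_i$ exhibits a fibre of type $i$ over each $p_k$; a fully worked example in one case is deferred to Subsection \ref{explicituno}.

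\emph{Property $(i)$.} To make the fibration of general type one must arrange enough $C$-fibres, as the degree criterion in Example \ref{multpuno} requires. This can be achieved either by placing many singularities in $B$ from the outset or, more economically, by performing a suitable base change $\psi\colon \puno \to \puno$ after resolution to multiply the type-$i$ fibres; call the resulting surface and fibration $S_i$ and $g_i$. Absence of multiple fibres is automatic in genus two by Remark \ref{nomult}, since the multiplicity of a multiple fibre must divide $g-1 = 1$. Moreover $S_i$ is of general type by the result of Campana recalled just after Example \ref{multpuno}, since $g_i$ is general type of genus $\geq 2$.

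\emph{Property $(ii)$.} Each of the four $C$-fibres in Figure 1 is a tree-like configuration of smooth rational curves, hence topologically simply connected. A classical principle---which I would package as Proposition \ref{cornalba}, invoking the result on fundamental groups of fibred surfaces mentioned in the acknowledgements---guarantees that a smooth proper fibration with connected fibres over a simply connected base, admitting at least one simply connected fibre, has simply connected total space. Applying this to $g_i$ and a fibre of type $i$ yields simple connectedness of $S_i$.

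The main obstacle lies in the construction step: one must trace a sometimes long sequence of blow-ups in the canonical resolution to confirm that the combinatorial type of the resolved fibre matches exactly the configuration in Figure 1, and this is where the explicit local form from \cite{N-U} is indispensable. Once the construction delivers a $C$-fibre of the prescribed type in sufficient number, both properties $(i)$ and $(ii)$ follow from the general facts already collected in Sections \ref{C} and \ref{C2}, together with the topological simplicity of the special fibre.
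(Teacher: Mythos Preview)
Your construction and your treatment of the ``general type'' and ``no multiple fibres'' claims match the paper's approach closely. The genuine gap is in your simple-connectedness argument. You assert that each of the four $C$-fibres in Figure~1 is a tree of smooth rational curves and hence simply connected; this is false for type~$2$. The paper states explicitly, just before Proposition~\ref{cornalba}, that the argument ``simply connected fibre over $\puno$ implies simply connected total space'' (Campana's Lemme~5.8) works for $i=1,3,4$ but \emph{cannot be applied} to $X_2^n$. The type-$2$ fibre contains a cycle in its dual graph, so it is not simply connected, and your route to property~$(i)$ breaks down in that case.

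The paper closes this gap by a completely different mechanism: Proposition~\ref{cornalba} does not use the topology of a special fibre at all. Instead it observes that each $X_i^\beta$ is (birational to) a double cover of $B\times\puno$ branched along an \emph{ample} divisor, and then applies Cornalba's Lefschetz-type theorem that a cyclic cover with ample branch locus has the same fundamental group as the base. This yields $\pi_1(X_i^n)\cong\pi_1(\puno)=\{1\}$ uniformly for all four types. You appear to have guessed that Proposition~\ref{cornalba} packages the ``simply connected fibre'' principle; in fact it packages something orthogonal to it, and that is precisely what rescues the case $i=2$.
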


We construct the surfaces by using the explicit equations for the local deformations of such fibres given  by Namikawa and Ueno.
Indeed the two authors give for any fibre a local equation of a family of sextics which realize locally a smooth family with the desired fibre (some of these equations were also found by Ogg):
$$
\begin{tabular}{|c|c|}
\hline
type $1$& $y^2=t(x^6+\alpha tx^3+t^2)$\\
\hline
type $2$ & $y^2=t(x^2+t)(x^4+t)$\\
\hline
type $3$ & $y^2=t(x^3+t^2)((x-1)^3+t^2)$\\
\hline
type $4$ & $y^2=t((x^3-t)^2+t^{h+2})$\\
\hline
\end{tabular}
$$

\medskip

The coordinate $t$ is the local coordinate of the base, $x$ of the fibre.
When one makes a canonical resolution (\cite{BHPVdV}, III.7 and \cite{P}) of this double cover 
the result is a smooth surface with a fibration over a complex disc such that the fibre over $0$ is the desired one. As an illustration, we shall develop in detail this computation for the first case in \ref{explicituno}.

\medskip

Let us call $([x:z],[t:s])$ the coordinates in $\pr^1\times \pr^1$. We projectivize the four  equations of the branches as follows
$$
(1)\,\, ts(s^2x^6+\alpha stx^3xz^3+t^2z^6)\quad(2)\,\,ts(sx^2+tz^2)(sx^4+tz^4)
$$
$$
\quad(3)\,\, ts(t-s)(s^2x^3+t^2z^3)(s^2(x-z)^3+t^2z^3)\quad 
(4)\,\, ts(s^{h}(sx^3-tz^3)^2+t^{h+2})
$$
Let $B_i$ be the curve defined by equation $(i)$. These are even divisors in $\puno\times\puno$, i.e. there exist a line bundle $\mathcal L_i$ such that $\mathcal L_i^{\otimes 2}\cong \mathcal O_{\puno\times\puno}(B_i)$.
Let $Y_i$ be the double cover of $\puno\times\puno$ ramified over these branches. The $B_i$'s are singular reduced curves, so $Y_i$  are singular normal surfaces \cite{P}. Apply the canonical resolution process to $Y_i\longrightarrow \puno\times\puno$, and let  $X_i$ be the minimal model of the surface obtained. 
Any of the $X_i$'s admits a fibration of genus two over $\puno$ with the fibre  over $[0:1]$ of  type $i$, and possibly some other  singular fibres, none of which is multiple by Remark \ref{nomult}. In particular, by the symmetry of equations $(1)$ and $(2)$ we see that $X_1$ and $X_2$ have another fibre of type $1$ and $2$ over $[1:0]$.

In order to produce a fibration of general type from our construction, we argue as Campana in \cite{Cfm}. 
Consider $n\in \mathbb N_{\geq 2}$, and perform a cyclic  base change of degree $n$ from $\puno $ to $\puno$ ramifying in two points other than $[0:1]$ and $[1:0]$. Let $X_i^n$ be the normalization of the induced fibred product.
This way, we obtain a fibration $f^n_i\colon X_i^n\longrightarrow \pr^1$ with at least $2n$  fibres of type $i$ for $i=1,2$, or at least $n$  fibres of type $i$ for $i=3,4$.
To produce a fibration over an elliptic curve, we  can for instance make a base change of degree $2$ (resp. $3$) 
ramifying on $4$ (resp. $3$) points other than $[0:1]$. 
For $n\geq 6$  these fibrations are of general type (Definition \ref{gtf}) and  - by  \cite{Cfm}, Prop.1.7 - the surface obtained are of general type. 
We have thus proved the following result.

\begin{prop}\label{prop1}
For $i=1,2$ and $n\geq 3$ (resp. $i=3,4$ and $n\geq 6$) the fibrations $f^n_i\colon X_i^n\longrightarrow \pr^1$ are of general type with fibres of type $i$, and no multiple fibres. The surfaces $X_i^n$ are of general type.
\end{prop}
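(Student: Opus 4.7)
The plan is to verify four assertions in turn: $(\alpha)$ $f_i^n$ admits fibres of type $i$ (at least $2n$ for $i=1,2$ and at least $n$ for $i=3,4$); $(\beta)$ $f_i^n$ has no multiple fibres; $(\gamma)$ $f_i^n$ is Campana general type; and $(\delta)$ $X_i^n$ is of general type. I would first establish $(\alpha)$ by noting that the canonical resolution of $Y_i$ already produces a fibre of type $i$ on $X_i$ above $[0:1]$ (the case $i=1$ being treated in detail in \ref{explicituno}); for $i=1,2$ the evident $s\leftrightarrow t$ symmetry of equation $(i)$ yields a second fibre of type $i$ above $[1:0]$. Since the cyclic degree $n$ base change $\phi\colon\pr^1\to\pr^1$ is ramified at two points distinct from $[0:1]$ and $[1:0]$, it is \'etale over suitable neighborhoods of these two points; hence the restriction of the normalized fibre product $X_i^n$ to $\phi^{-1}$ of such a neighborhood is a disjoint union of $n$ copies of the corresponding piece of $X_i$, each contributing a fibre of type $i$ to $f_i^n$.

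For $(\beta)$, the general fibre of $f_i^n$ is a genus $2$ curve since base change preserves generic fibres, so Remark \ref{nomult} rules out multiple fibres (any multiplicity would have to divide $g-1=1$). For $(\gamma)$, each $C$-fibre of type $i$ satisfies $m(b)\ge 2$, contributing at least $\tfrac{1}{2}$ to $\deg\Delta(f_i^n)$; by the formula in Example \ref{multpuno},
\[
\deg\bigl(K_{\pr^1}+\Delta(f_i^n)\bigr)\;\ge\; -2 + \begin{cases} 2n\cdot\tfrac{1}{2}=n & (i=1,2), \\[2pt] n\cdot\tfrac{1}{2}=n/2 & (i=3,4), \end{cases}
\]
which is strictly positive under the hypotheses $n\ge 3$ (resp. $n\ge 6$) of the statement. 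This gives $\kappa(\pr^1,K_{\pr^1}+\Delta(f_i^n))=2$, i.e. general type in Campana's sense. For $(\delta)$, having $(\gamma)$ and the fact that the generic fibre has genus $2$, the result \cite{Cfm}, Prop. 1.7, recalled in the discussion after Example \ref{multpuno}, immediately yields that $X_i^n$ has Kodaira dimension $2$.

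The only delicate point is $(\alpha)$: verifying that the $C$-fibres of $X_i$ really survive the normalization of the fibre product unchanged. This reduces to the observation that $\phi$ is \'etale over neighborhoods of $[0:1]$ and $[1:0]$, so that $X_i^n\to X_i$ is an \'etale covering there and the local structure of each singular fibre is transported isomorphically. Once this local triviality is in hand, $(\beta)$--$(\delta)$ follow from the cited results (Remark \ref{nomult} and \cite{Cfm}, Prop. 1.7) together with the elementary numerical bound in $(\gamma)$.
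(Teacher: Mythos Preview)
Your proof is correct and follows essentially the same route as the paper's own argument (which is given in the paragraphs immediately preceding the proposition): existence of the type-$i$ fibres via canonical resolution and the $s\leftrightarrow t$ symmetry for $i=1,2$, preservation under the \'etale part of the cyclic base change, exclusion of multiple fibres by Remark \ref{nomult}, the numerical check that $\deg(K_{\pr^1}+\Delta)>0$, and finally \cite{Cfm}, Prop.~1.7 for Kodaira dimension $2$. You have simply made the degree estimate in $(\gamma)$ more explicit than the paper does.
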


We now turn our attention to the fundamental groups of our surfaces. For $i=1,3,4$, the surfaces $X_i^n$ are simply connected, having a simply connected fibre and base curve (\cite{Cfm}, Lemme 5.8). 
This result can not be applied to the surfaces $X_2^n$.
However, we can prove directly that all our surfaces are indeed simply connected using a Lefschetz-type result of Cornalba  \cite{cornish} whose statement is the following. If $X$ is a smooth variety and $X'\longrightarrow X$ a cyclic covering whose branch locus is ample, then the fundamental groups of $X$  and $X'$ are isomorphic.
\begin{prop}\label{cornalba}
Let $X_i\longrightarrow \pr^1$ as above. Let $\beta \colon B\longrightarrow \pr^1$ any covering and let $X_i^\beta$ the desingularization of the fibred product induced by $\beta$. Then $\pi_1(X_i^\beta)\cong \pi_1(B)$. In particular, the surfaces $X_i^n$ are simply connected for any $i=1,\ldots ,4$.
\begin{proof}
Let $n$ be the degree of $\beta$. Let $Y_i^\beta$ be the fibred product induced by $f_i\colon X_i\longrightarrow \pr^1$ and $\beta$, and let $B^\beta_i$ be the branch locus of the double cover   $Y_i^\beta\longrightarrow B\times \pr^1$.
Observe that $B^\beta_i$ is of bi-degree $(4n, 6)$ for $i=1,2$ (resp. $(6n, 6)$ for $i=3$, $(n(h+4), 6)$ for $i=4$) in $B\times \pr^1$. Hence (\cite{Har} sec.V.2) $B^\beta_i$ is ample for any $i$, and we can apply Cornalba's result, obtaining that $\pi_1(Y_i^\beta)\cong \pi_1 (B)$. Now observe that $X_i^\beta$ is a desingularization of $Y_i^\beta$, and hence their fundamental groups are isomorphic.
\end{proof}
\end{prop}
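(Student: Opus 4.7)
The plan is to reduce the computation of $\pi_1(X_i^\beta)$ to the fundamental group of the base $B\times\pr^1$ via the Lefschetz-type theorem of Cornalba that has just been recalled. The degree $2$ normal cover $Y_i^\beta\longrightarrow B\times\pr^1$ is cyclic, so Cornalba applies as soon as its branch locus $B_i^\beta$ is ample; once ampleness is established, the isomorphism $\pi_1(Y_i^\beta)\cong\pi_1(B\times\pr^1)\cong\pi_1(B)\times\pi_1(\pr^1)\cong\pi_1(B)$ will be immediate.

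The first step is therefore to identify $B_i^\beta$ as the pullback $(\beta\times\mathrm{id}_{\pr^1})^{*}B_i$ and to read off its bidegree on $B\times\pr^1$. Inspecting the projectivized equations displayed above, the divisors $B_i\subset\pr^1\times\pr^1$ have bidegree $(4,6)$ for $i=1,2$, bidegree $(6,6)$ for $i=3$ and bidegree $(h+4,6)$ for $i=4$ (base coordinate first). Since $\beta$ has degree $n$, pullback multiplies the first entry by $n$, yielding the bidegrees $(4n,6)$, $(6n,6)$ and $(n(h+4),6)$ respectively. On a product of two smooth curves a divisor of bidegree $(a,b)$ is ample precisely when $a,b>0$ (see \cite{Har}, V.2), hence $B_i^\beta$ is ample for every $i$.

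With ampleness in hand, Cornalba's theorem yields $\pi_1(Y_i^\beta)\cong\pi_1(B\times\pr^1)\cong\pi_1(B)$. It remains to pass from the normal surface $Y_i^\beta$ to its desingularization $X_i^\beta$. This uses the standard fact that for a normal projective surface the fundamental group is invariant under any resolution of the singular points, since the exceptional fibres of a resolution of isolated surface singularities do not introduce new generators of $\pi_1$. Combining the two isomorphisms gives $\pi_1(X_i^\beta)\cong\pi_1(B)$, and specializing to $B=\pr^1$ with $\beta$ of degree $n$ (as in the construction of $X_i^n$) produces simply connected surfaces in all four cases.

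The only point that needs genuine care is the monomial bookkeeping in Step~2: one has to verify for each of the four projectivized sextics that the total $(t,s)$-degree and $(x,z)$-degree are the ones claimed, since every positive entry of the bidegree is what switches ampleness on. This is routine but must be done case by case; once done, the rest of the argument is formal.
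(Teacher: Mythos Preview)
Your argument is correct and follows the paper's proof essentially verbatim: compute the bidegree of the pulled-back branch divisor on $B\times\pr^1$, invoke the ampleness criterion from \cite{Har}~V.2, apply Cornalba's Lefschetz-type theorem to the cyclic double cover, and then use invariance of $\pi_1$ under desingularization of a normal surface. The only difference is that you spell out the intermediate step $\pi_1(B\times\pr^1)\cong\pi_1(B)$ and the reason for $\pi_1$-invariance under resolution, which the paper leaves implicit.
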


\begin{rem}\label{density}
It is worth noticing that these surfaces, together the fibrations, are defined over $\mathbb Q$. This could be interesting from the point of view of arithmetic geometry, in view of the celebrated Bombieri-Lang conjecture: any surface of general type defined over a number field has rational points not  potentially dense (see \cite{Abr} for a general discussion, and for references on this topic). Also from this point of view there is a dichotomy between multiple and $C$-fibres. Indeed, a general type fibration without $C$-fibres is easily seen not to be potentially dense, using the induced \'etale covering from a surface fibred over a curve of genus $\geq 2$, as in \cite{CT/S/SD}. 
For varieties defined over number fields, Campana conjectures that the ones with potentially dense rational points are exactly those not admitting any general type fibration. He proves in \cite{Cfm} the hyperbolic and function field version of this statement, while the arithmetic one remains open, depending on an ``orbifold'' version of the Mordell conjecture.
It could be interesting to check the potential non-density of rational points in our explicit cases.
\end{rem}

\section{Explicit computations and the even genus case}\label{explicit}
\subsection{Canonical resolution for the first case}\label{explicituno}
We now write down explicitly the canonical desingularization in the first case, and verify by hand that there are two fibres of type 1.

Let $\pi\colon X\longrightarrow W$ be a double cover with $W$ smooth and  branch divisor $R$ singular and reduced.
  the  canonical resolution
(see \cite{BHPVdV} III.7) is a standard construction that allows to obtain a smooth surface birational to $X$. Note that this surface is not necessarily minimal.
This process consists of resolving by successive blow ups the singularities of $R$, and forming step by step an associated double cover, 
producing  the following commutative diagram.
$$
\begin{array}{cccccccccl}
 X_k & \stackrel{\sigma_k}{\longrightarrow} & X_{k-1} & \longrightarrow &...& \longrightarrow & X_1 & \stackrel {\sigma_1}{\longrightarrow} & X_0 & =X \\
\downarrow &  & \downarrow & & & &\downarrow & & \downarrow &\\
Y_k & \stackrel{\tau_k}{\longrightarrow} & Y_{k-1}& \longrightarrow & ... & \longrightarrow & Y_1 &\stackrel{\tau_1}{\longrightarrow} & Y_0 &=Y\\
\end{array}
$$ 
\noindent The $\tau_j$ are successive blow-ups that resolve the singularities of $R$; 
and the morphism $X_j\rightarrow Y_j$
is the double cover with branch locus 
\begin{equation}\label{ramificazione}
R_j:=\tau_j^*R_{j-1}-2\left[\frac{m_{j-1}}{2}\right]E_j,
\end{equation}
where $E_j$ is the exceptional 
divisor of $\tau_j$, $m_{j-1}$ is the multiplicity of the blown-up point, and $[\ \ ]$ stands for integral part.
At the end of this process, we obtain a smooth surface birational to the original one. 

Let us consider the first equation.
Recall that the branch divisor is (taking $\alpha=1$)

\medskip
\centerline{$R:=B_1=Z(st(x^6s^2+s tx^3z^3+t^2z^6))\subset \puno\times \puno,$}

\smallskip
\noindent where $([t\colon s], [x\colon z]) $ are the coordinates of $\puno\times \puno$.
We now apply the canonical resolution machinery to the singular point $p=([0\colon 1], [0\colon 1])$ of $R$.
Let us follow the the local behavior of the branch loci under process of canonical resolution.

Let us consider the affine subset given by the equation $t(x^6+ tx^3+t^2)=0$ in the affine space $\{s=t=1\}\cong \mathbb C^2$, and 
Let $C_1=\{t=0\}$, $C_2=\{2x^3+t(1+\sqrt{t^2-4})=0\}$ and $C_3=\{2x^3+t(1-\sqrt{t^2-4})=0\}$ be the local components of the branch divisor near $p$.
Let $\tau_1$ be the blow up in $p=(0,0)$; let $E_1$ be the exceptional divisor of $\tau_1$. As $p$ has multiplicity $3$ we get as total transform 
$$
\tau_1^*(C_1+ C_2 +C_3)= \widetilde C_1+ \widetilde C_2 + \widetilde C_3+ 3E_1, 
$$
where the $\widetilde C_i$'s are the strict transforms of the $C_i$'s.
A local affine equation for the total transform is $ux^3(x^4+ux^2+u^2)=0$.
Hence the branch divisor of the first double cover, by equation (\ref{ramificazione}),  is
$$R_1:=\tau_1^*(C_1+ C_2 +C_3)-2E_1= \widetilde C_1+ \widetilde C_2 +\widetilde C_3+ E_1.$$

By local computations as above it is immediate to see that this branch locus has a singular point $p_1$ and the equation around this point is $ux(x^4+ux^2+u^2)=0$. So, this point has multiplicity $4$.
Let $\tau_2$ be the blow up of $p_1$, the branch divisor now becomes (with the obvious notations) 
$$R_2:=\tau_2^*(\widetilde C_1+ \widetilde C_2 +\widetilde C_3+ E_1)-4E_2=\widetilde {\widetilde C_1}+ \widetilde{\widetilde C}_2 +\widetilde{\widetilde C}_3+ \widetilde E_1.$$
Again by local computations, we see that $E_2$ separates $E_1$ and the $C_i$'s.
The process goes on until the branch locus is (disconnected and) smooth as illustrated in the figure below. 

\vspace{0.2cm}
\begin{center}
{}{}\scalebox{0.40}{\includegraphics{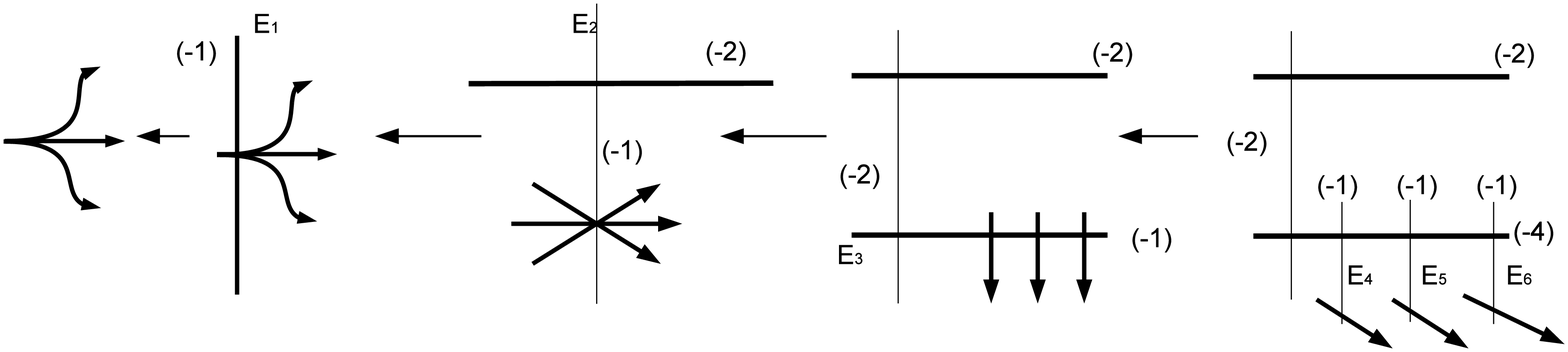}}
\end{center}
\vspace{0.2cm}
\noindent The bold lines  indicate the branch locus $R_j$ at every step of the desingularisation.
The curves with arrows  indicates the strict transform of the local branches of the curve $R$ near $p$. 
The straight lines are the exceptional divisors arising from the blow ups.
The numbers in brackets are the self-intersection of the exceptional divisors at each step.
In the last step, we blow up all the three singular points of the branch locus.
If we consider the  double cover branched along the last branch locus, the pullback of this  divisor is as follows.

\begin{center}
{}{}\scalebox{0.40}{\includegraphics{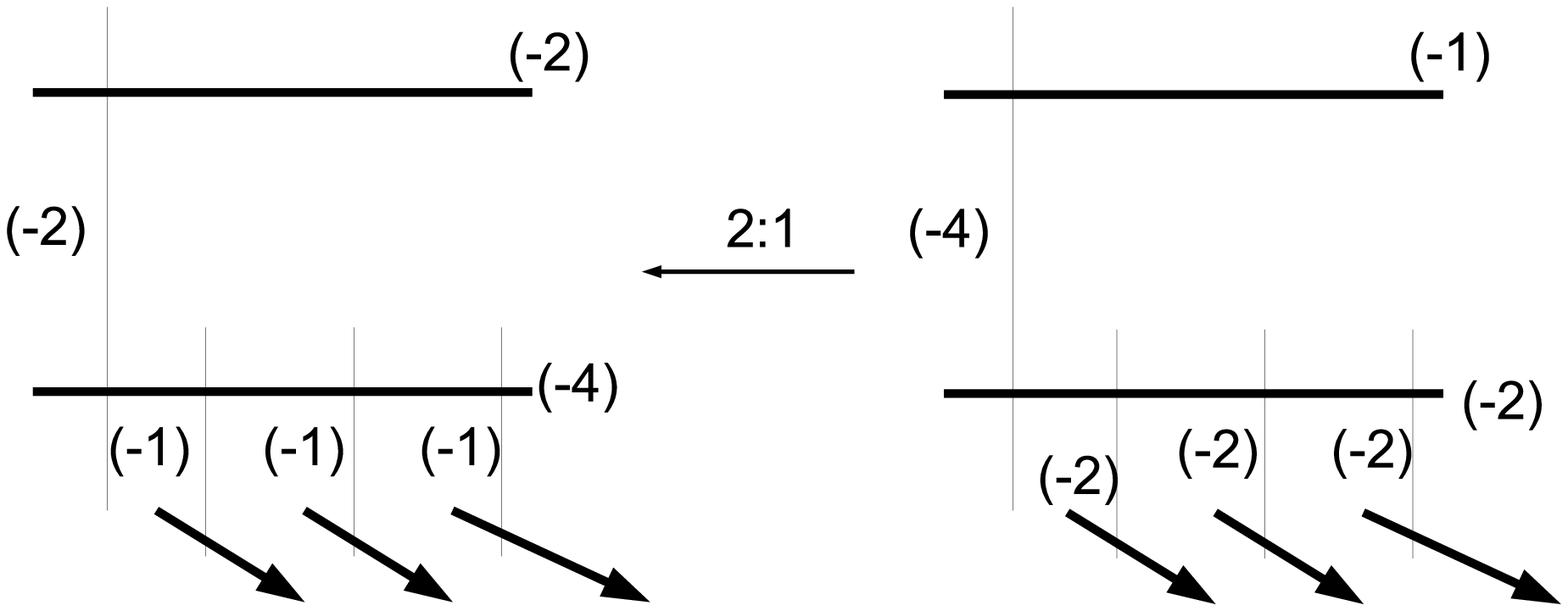}}
\end{center}

\noindent Observe that a $-1$-curve arises after the double cover: we therefore contract it.

The branch locus $R$ has only one other singular point, namely $q=([1\colon 0], [1\colon 0])$, and the local 
equation near $q$ is exactly the same as the one near $p$. 
Let $Z_1$ be the smooth surface obtained applying the above process to both $p$ and $q$.
Let us consider the fibration $f_1\colon Z_1\longrightarrow \puno$ obtained from the projection of $\puno\times\puno$ 
on the first coordinate. 
In order to understand the fibre over $t=0$, we have to ``follow''  $t=0$ in the resolution, as showed by the figure below. 
\vspace{0,1cm}

\begin{center}
{}{}\scalebox{0.40}{\includegraphics{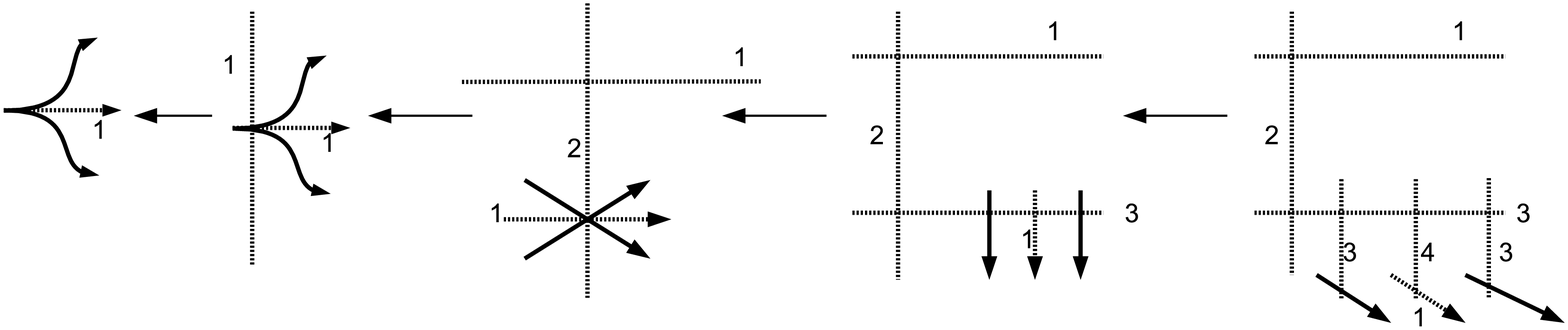}}
\end{center}
\noindent  The dotted line is the fibre on each step, and as usual the numbers in the figure represent the multiplicities of the components.

Applying now the double cover - remembering that the multiplicity of the components contained in the ramification locus get multiplied by $2$, while the other multiplicities remain unchanged - we see that the fibre becomes exactly of the desired type.

\begin{rem}\upshape{
Note that $Z_1$  also has an elliptic  fibration  (i.e. a fibred surface such that the general fibre is an elliptic curve) 
$e_1\colon Z_1 \longrightarrow \puno$, obtained considering the second projection.
Arguing as we did for the genus two fibration, we can see that $e_1$ has only two singular fibres of the form
\vspace{0.2cm}
\begin{center}
{}{}\scalebox{0.48}{\includegraphics{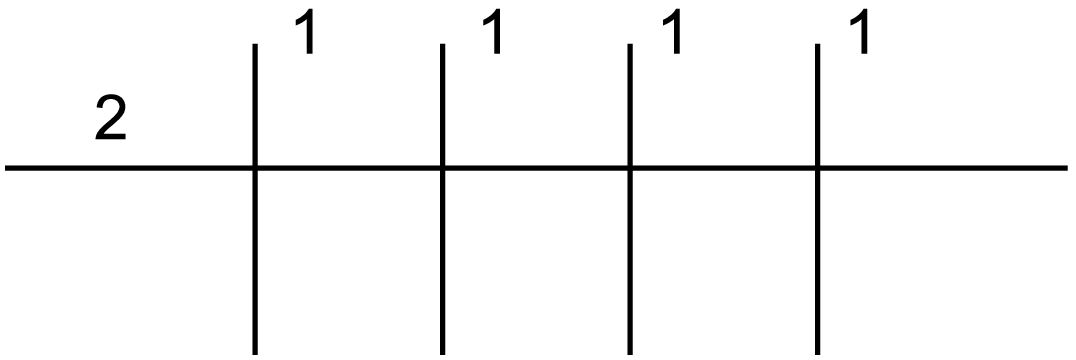}}
\end{center}
}
\end{rem}

\subsection{Invariants of $X_1^n$}
We can now compute explicitly  the invariants of the surface obtained via the first  equation. Let us first recall from the first section \cite{P} that an {\em infinitely closed triple point of order $k$} is a triple point of a locally planar reduced curve such that it turns into an ordinary triple point after $k$ successive blow ups.
\begin{prop}\label{ratio}
Let   $n\geq 2$, and let $X_1^n$ be as in Section \ref{explicit}.
This surfaces are of general type and their  Chern invariants are 
$$c_1^2(X_1^n)=6n-8, \quad c_2(X_1^n)=18n-4.$$
\end{prop}
\begin{proof} 
Let us start by considering the branch locus $B_1=Z(st(x^6s^2+s tx^3z^3+t^2z^6))$. From \ref{explicituno}, we have that $B_1$ has only two singular points $p=[0:1]$ and $q=[1:0]$, that both are infinitely close triple points of order $2$.

Let $\beta\colon \puno\longrightarrow \puno$ be a cyclic base change of degree $n$ that ramifies on two points other than $p$ and $q$.
As in Proposition \ref{cornalba}, let us call $Y_1^n$ the normalization of the fibred product induced by $\beta$ and $f_1$.
Let us call $Z_1^n$ the smooth surface obtained from $Y_1^n$ applying the canonical resolution, as described above, to the $2n$ singular points of the branch and then contracting all the $-1$-curves in the fibres. 
The surface $X_1^n$ is the minimal model of  $Z_1^n$, but we are going to show that  $Z_1^n$  is itself minimal, so that the two surfaces coincide.
Let $H_1$ and $H_2$ be the generators of the N\'eron-Severi group of $\puno\times \puno$. By the formulas for the invariants of double covers we have that
$$c_1^2(Y_1^n)=2((2n-2)H_1+H_2)^{\cdot 2}=8n-8,$$
and 
$$
\begin{array}{ll}
c_2(Y_1^n)&=2c_2(\puno\times \puno)+ (K_{\puno\times \puno}+4nH_1+6H_2\cdot 4nH_1+6H_2)=\\
 &= 8+16n+6(4n-2)=40n-4.\\
 \end{array}$$
Hence, $\chi(Y_1^n)=1/12(c_1^2(Y_1^n)+c_2)=4n-1$.
As all the $2n$ singular points of the branch are infinitely close triple points of order $2$, by the generalization of Prop. 1.11 in \cite{P}, we have that 
$$c_1^2(Z_1^n)=c_1^2(Y_1^n)-2n=6n-8,$$
$$\chi(Z_1^n)=\chi(Y_1^n)-2n=2n-1,$$
and hence $c_2(Z_1^n)=18n-4$.
To prove that $Z_1^n$ is minimal, we can observe that any $-1$-curve inside this surface is either the pullback of an exceptional not meeting the branch locus divisor in the blow up of $\puno\times \puno$, or the reduced pullback of a $-2$-curve inside the branch locus. But we see from the explicit computations in \ref{explicituno} that there are no curves of the first type, whereas there are $2n$ curves of the second kind are already contracted in $Z^n_1$. Note that we could also use directly Corollary of Prop. 4.2 of Xiao in \cite{XLNM} to conclude that $Z_1^n$ is minimal.

The fact that $X_1^n$ is of general type follows from the Enriques-Kodaira classification of surfaces.
\end{proof}
Note that we prove directly, without using Campana's result \cite{Cfm}, that these surfaces are of general type for $n\geq 3$, which
is the case in which the fibration is of Campana general type.
Moreover, we see that also the case $n=2$ gives a general type surface, although the fibration is not of Campana general type.

\begin{rem}\label{isotrivial}
Recall that a fibration is \emph{isotrivial} if all its smooth fibres are mutually isomorphic. 
It is easy to see that the $f_1^n$'s are isotrivial fibrations.
For instance it can be checked directly that  the stable reduction of the singular fibres of this fibrations is  a smooth curve of genus two\footnote{This is another evidence of the somehow surprising fact that the semistable reduction of of even extremely complicated 
non-semistable curves could be very simple; in this case even smooth, cf. \cite{H-M}, sec.3.C.}.

Differently from $f_1$, the fibration $f_2$ has another  singular fibre over $r=[1:1]$.
Moreover, the stable reduction of the two $C$-fibres is a curve of geometric genus $1$ with one node. 
Therefore, $f_2$ is an example of a non-isotrivial fibration over $\puno$ having  the smallest possible number of singular fibres, 
according to \cite{Beaumin}.
Note that $X_2$, as the examples given in \cite{Beaumin}, is a rational surface. By making a base change of degree $2$ that ramifies in two of the points corresponding to singular fibres, we obtain a fibred surface of general type with $4$ singular fibres. If, on the other side, we make a base change in the direction of the elliptic fibration, it is easy to check that we obtain a surface with Kodaira dimension $1$ with a non-isotrivial fibration of genus $2$ having three singular fibres. 

For non-isotrivial semistable fibrations, on the contrary, it holds that the number of singular fibres is greater or equal to four \cite{Beaumin}, and if it is less or equal to $5$, then the Kodaira dimension of the surface is negative \cite{Tanmin2}.

As for the other fibrations, it is a simple check to see that $f_3$ is isotrivial, while $f_4$ is not.
\end{rem}


\subsection{Even genus fibrations of general type}
With a simple modification of the third equation we can produce other fibrations with $C$-fibres, in arbitrary even genus.
\begin{prop}\label{echecacchio}
Let $n\geq 2$ be an even integer and let $\omega\in \mathbb C$ be a primitive $n$-th root of unity. 
Let $$R^n:=Z(st \prod_{i=1}^n(s^2(x-\omega^iz)^3+t^2z^3))\subset \puno\times\puno.$$

Let $Z^n$ be the canonical resolution of the double cover of $\puno\times \puno$ ramified over $R^n$, and let $f^n\colon Z^n\longrightarrow \puno$ be the fibration induced by the projection on the first factor. Then $f^n$ is of genus $n$ with fibre over $[0:1]$ of the form described in figure below.
\begin{center}
{}{}\scalebox{0.60}{\includegraphics{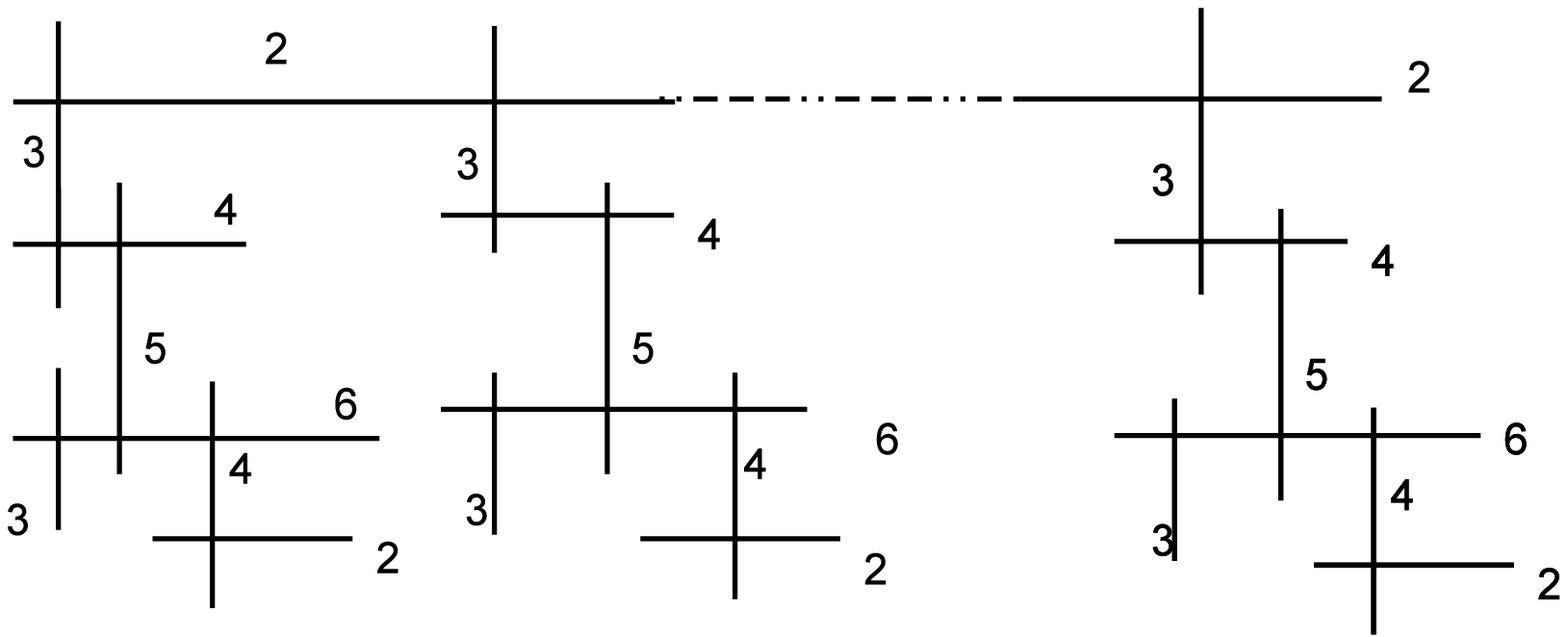}}
\end{center}
\vspace{0,2cm}
\end{prop}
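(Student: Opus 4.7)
My plan is to follow the template of Proposition \ref{prop1} and Theorem \ref{main}, together with the detailed computation in Section \ref{explicituno}, adapted to the branch locus $R^n$.

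The first step is a bidegree check: in the coordinates $([x:z], [t:s])$, each factor $D_i := \{s^2(x - \omega^i z)^3 + t^2 z^3 = 0\}$ has bidegree $(3, 2)$, and the rulings $\{s=0\}$ and $\{t=0\}$ have bidegree $(0, 1)$, so $R^n$ has bidegree $(3n, 2n + 2)$. Both entries are even precisely when $n$ is even, so $R^n = 2\mathcal{L}$ for some line bundle $\mathcal{L}$, and the double cover $Y^n \to \puno \times \puno$ is a well-defined normal surface.

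For the generic genus, the restriction of $R^n$ to a generic fibre $\{[x_0:1]\} \times \puno_{[t:s]}$ of the first projection is $st\prod_{i=1}^n(s^2(x_0-\omega^i)^3 + t^2)$, which has $2n+2$ roots -- one from each of $s$ and $t$, and two from each $D_i$ -- distinct for generic $x_0$. By Riemann-Hurwitz, the generic fibre of $f^n$ is a smooth hyperelliptic curve of genus $n$.

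The heart of the proof is the analysis of the fibre over $[0:1]$. Restricting $R^n$ to $\{x=0\}$ yields $st\prod_{i=1}^n(t^2 - \omega^{3i}s^2)$; one must then identify the singularities of $R^n$ on and infinitely near this fibre. In particular, each $D_i$ has a cusp at $([\omega^i:1],[0:1])$, tangent to $\{t=0\}$ with intersection multiplicity $3$, and the components $D_i$ meet one another at the points of $\{x=0\}$ determined by the values of $\omega^{3i}$. For each such singular point, perform the canonical resolution by successive blow-ups, with the branch updating according to $R_j=\tau_j^*R_{j-1}-2\lfloor m_{j-1}/2\rfloor E_j$, take the induced double cover at each stage, and finally contract any $(-1)$-curves that arise, exactly as in Section \ref{explicituno}. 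Tracking the strict transform of $\{x=0\}$ together with the exceptional divisors lying above it, and reading off the multiplicity of each component according to whether it is contained in the final branch, yields the fibre configuration shown in the figure.

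The main technical obstacle is the combinatorial bookkeeping of the successive blow-ups and the evolution of branch multiplicities through each stage, especially at the points where several components of $R^n$ meet; although each individual step is mechanical, keeping track of the full resolution with $n$ symmetric branches tangent to $\{t=0\}$ is tedious.
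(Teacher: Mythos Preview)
Your proposal is internally inconsistent about which projection is being used, and this is the source of a genuine gap.

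You compute the genus by restricting $R^n$ to a line $\{x=x_0\}$, i.e.\ you take the projection to the $[x:z]$--factor; this does give genus $n$. But then you analyse the special fibre as the line $\{x=0\}$ and try to feed in the cusps of the $D_i$. Those cusps sit at $([\omega^i:1],[0:1])$, i.e.\ on the line $\{t=0\}$, \emph{not} on $\{x=0\}$. Blowing them up produces exceptional divisors over $\{t=0\}$, so they contribute nothing to the strict transform of $\{x=0\}$ or to the exceptional curves above it. The only singularities of $R^n$ on $\{x=0\}$ are the coincidences among the $D_i$ coming from repetitions among the $\omega^{3i}$; resolving those yields a quite different configuration, not the one in the figure.

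The paper's proof works with the other projection (to the $[t:s]$--factor, consistently with Sections~\ref{C2} and~\ref{explicituno}). Near $\ell=\{t=0\}$ the branch decomposes as $\ell$ together with $n$ pairwise disjoint local branches $(x-\omega^i)^3+t^2=0$, each with an ordinary cusp on $\ell$. One canonical resolution, identical for each of the $n$ cusps and of the same flavour as in \S\ref{explicituno}, turns each cusp into a tail of rational components; the pullback of the strict transform of $\ell$ is the multiplicity--$2$ backbone to which the $n$ tails are attached. That is the whole argument --- there is no interaction between different $D_i$'s near $t=0$, so none of the ``$D_i$ meeting one another on $\{x=0\}$'' bookkeeping is needed.

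So: to obtain the fibre in the figure you must track the line $\{t=0\}$, not $\{x=0\}$. (With that projection the generic fibre is a double cover of $\puno$ branched in $3n$ points, hence of genus $\tfrac{3n-2}{2}$; this equals $n$ only for $n=2$, so there is a small wrinkle in the statement, but the fibre picture in the proposition really is the one over $t=0$.)
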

\begin{proof}
The branch $R^n$ near the line $\ell=\{ t=0\}$ has as local components $\ell$ and $n$ branches which have a cusp at the points $(0, \omega^i)$, for $i=1,\ldots n$. It is immediate to check, via similar computations as in \ref{explicituno}, that these cusps produce in the fibre of $f^n$ over  $[0:1]$ $n$ tails made  of rational multiple components as in the figure, attached to a rational component with multiplicity $2$, which is the pullback of the strict transform of $\ell$. This process can be better illustrated via the following figures. As usual the bold lines are the components of the branch locus. The dotted line represents the strict transform of $\ell$.
\vspace{0.7cm}
\begin{center}
{}{}\scalebox{0.84}{\includegraphics{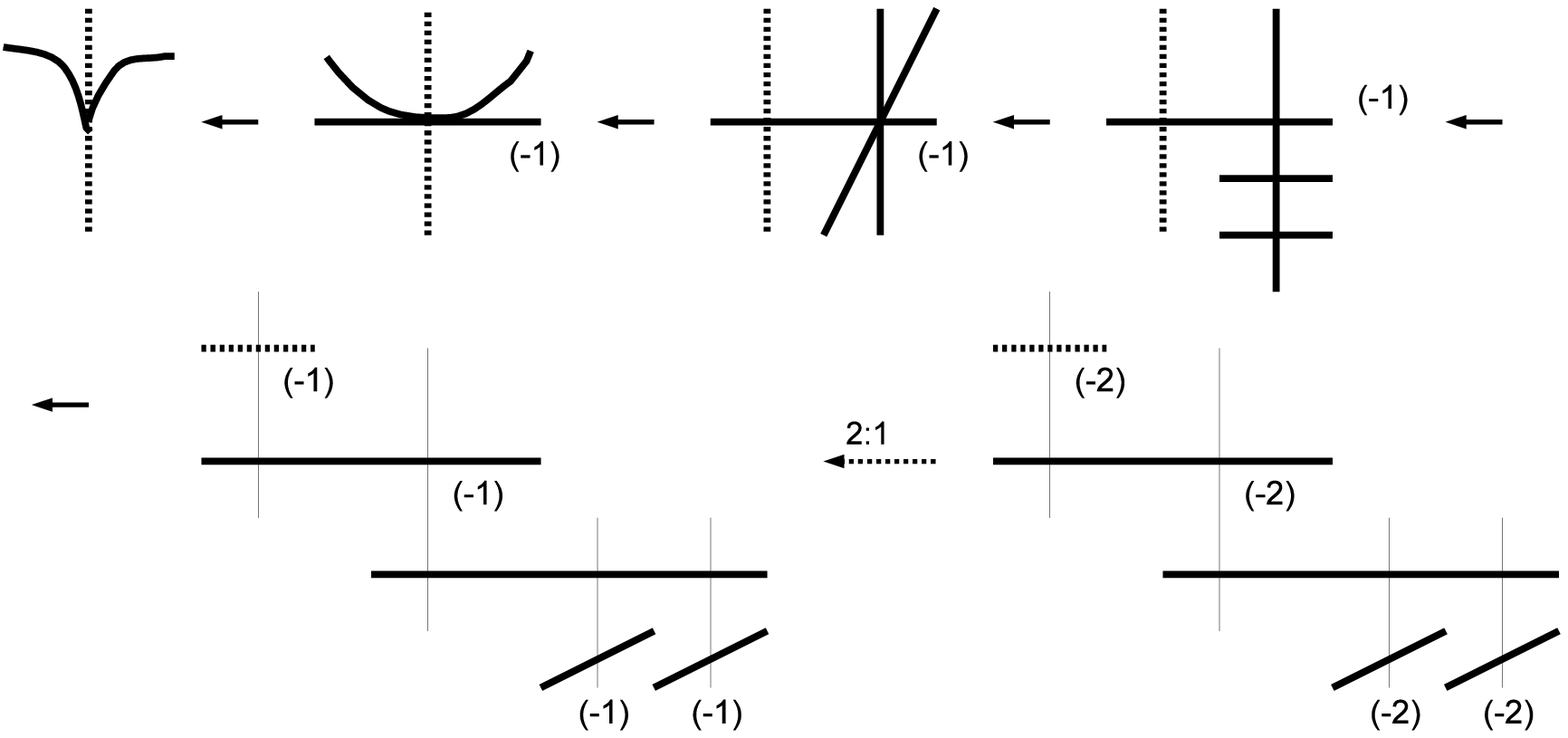}}
\end{center}
\end{proof}
\vspace{0.2cm}

\begin{teo}\label{evengenus}
Let $n\geq 2$ be an even integer. There exists a fibration of genus $g$ $f\colon S\longrightarrow \puno$ 
such that \\
$(i)$ $f$ is of general type;\\
$(ii)$ the surface $S$ is of general type and simply connected;\\
$(iii)$ $f$ admits fibres of type $(A)$ and no multiple fibres.
\end{teo}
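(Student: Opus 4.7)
The strategy is to adapt the construction of Section \ref{C2} to the genus-$n$ fibration $f^n \colon Z^n \to \puno$ provided by Proposition \ref{echecacchio}, mimicking step by step the proofs of Propositions \ref{prop1} and \ref{cornalba}. The point is that $f^n$ already supplies one C-fibre of type (A) over $[0:1]$, and a cyclic base change of sufficiently large degree will multiply this into enough C-fibres to force Campana general type.

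First I would fix an integer $m \geq 5$ and choose a cyclic base change $\beta \colon \puno \to \puno$ of degree $m$ totally ramified at two points lying outside the (finite) set of images of singular fibres of $f^n$, and in particular different from $[0:1]$. Let $Y$ denote the normalization of the fibred product $Z^n \times_{\puno} \puno$, and let $S$ be the smooth surface obtained by applying to $Y$ the canonical resolution of Section \ref{explicit} and then contracting all $(-1)$-curves contained in fibres. The induced map $f \colon S \to \puno$ is a genus-$n$ fibration whose fibre over each of the $m$ preimages of $[0:1]$ under $\beta$ is of type (A).

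For (i), each of these $m$ fibres has $m(b)=2$, so
\[
\deg(K_{\puno} + \Delta(f)) \;\geq\; -2 + m\!\left(1-\tfrac{1}{2}\right) \;=\; \tfrac{m-4}{2} \;>\; 0,
\]
and $f$ is of Campana general type by Definition \ref{gtf}. The general-type assertion of (ii) is then a direct application of Proposition 1.7 of \cite{Cfm}, which yields $\kappa(S)=2$ as soon as $f$ is of general type and of genus $\geq 2$. For the simple connectedness one argues exactly as in Proposition \ref{cornalba}: the branch divisor of the double cover $Y \to \puno \times \puno$ induced by $\beta$ has bidegree $(m(2n+2),\, 3n)$, hence is ample, so Cornalba's theorem \cite{cornish} gives $\pi_1(Y) \cong \pi_1(\puno) = 1$, and the same holds for its desingularization $S$.

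Finally, for (iii) I would invoke Remark \ref{nomult}: any multiple fibre of $f$ would have multiplicity dividing $g-1 = n-1$, which is odd since $n$ is even; on the other hand, each fibre of type (A) contains the pullback of the strict transform of the line $\ell = \{t=0\}$, a component of multiplicity $2$, so the g.c.d.\ of its component multiplicities is simultaneously a divisor of $2$ and odd, hence equal to $1$. The same parity trick, applied to the remaining singular fibres of $f$, completes (iii). The main obstacle I expect is precisely this last bookkeeping step: one must track the canonical resolution at every singular value of $f$ (those coming from the other singular fibres of $f^n$, pulled back $m$-fold by $\beta$) and verify that each such fibre contains at least one component of even multiplicity, which is what makes the parity argument go through.
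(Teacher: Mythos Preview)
Your treatment of (i) and (ii) matches the paper's almost verbatim: same cyclic base change away from the singular locus, same count for $\deg(K_{\puno}+\Delta(f))$, same appeal to \cite{Cfm}~Prop.~1.7 for $\kappa(S)=2$, and the same Cornalba argument for $\pi_1(S)=1$ (your bidegree $(m(2n+2),3n)$ is correct and ample). The minor resolution step you insert is harmless but unnecessary: since $\beta$ ramifies only over smooth fibres of $f^n$, the fibred product is already smooth.

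The divergence is in (iii), and the gap you flag at the end is real and not just bookkeeping. Your parity argument handles only the type~(A) fibres themselves; for the \emph{other} singular fibres of $f^n$ (over $[1:0]$ and possibly elsewhere) you would need each to carry a component of even multiplicity, and nothing in Proposition~\ref{echecacchio} tells you that. If some such fibre happened to have all multiplicities odd with $\gcd$ equal to, say, $3$ (and $3\mid n-1$), your argument would collapse. The paper sidesteps this case analysis entirely by turning (ii) against (iii): because $\beta$ is unramified over every singular value of $f^n$, each fibre of $f$ is a fibre of $f^n$, so a single multiple fibre would occur in at least $m$ identical copies. With $m\geq 5$ (the paper takes $d\geq 6$) Remark~\ref{multiple} then produces a nontrivial \'etale cover of $S$, contradicting the simple connectedness already established. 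Thus (ii) does the work for (iii), and no fibre-by-fibre parity check is needed.
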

\begin{proof}
Let us use the fibrations $f^n\colon Z^n\longrightarrow \puno$ constructed in Proposition \ref{echecacchio}.
Consider a cyclic base change $\beta\colon \puno\longrightarrow \puno$ of degree $d\geq 6$ ramifying in two points different from the points corresponding to singular fibres of $f^n$, and form the fibred product 
\begin{equation}\label{diag}
\xymatrix{
X^n \ar[r] \ar[d]_{g^n}& Z^n\ar[d]^{f^n}\\
\puno \ar[r]^\beta & \puno}
\end{equation}
So, $X^n$ has at least $6$ fibres of the form described in Proposition \ref{echecacchio}. By  the very same arguments as in Proposition \ref{cornalba}, $X^n$ is simply connected. Note that all its fibres are fibres also of $f^n$. If $g^n$ had a multiple fibre, then it would have at least $6$ copies of this fibre. From Remark \ref{multiple} we see that this would imply that $X^n$ is not simply connected, a contradiction.
\end{proof}

\addcontentsline{toc}{section}{References}

\medskip

\flushright {\em Dipartimento di Fisica e Matematica, \\via Valleggio 11, 22100 Como (Italy)}
\flushright{\em Email: lidia.stoppino@uninsubria.it}
\end{document}